\documentclass[preprint,3p,sort]{elsarticle}


\usepackage{amsfonts}
\usepackage{amsthm}
\usepackage{graphics}
\usepackage{epstopdf}
\usepackage{algorithmic}
\usepackage{ams math}
\usepackage{amssymb}
\usepackage{hyperref}
\usepackage[T1]{fontenc}
\usepackage{soul,color}
\usepackage{cases}
%


\newtheorem{theorem}{Theorem}[section]
\newtheorem{proposition}[theorem]{Proposition}
\newtheorem{lemma}[theorem]{Lemma}

\newtheorem{remark}[theorem]{Remark}
\theoremstyle{example}
\newtheorem{example}[theorem]{Example}
%


\numberwithin{equation}{section}
\begin{document}


\begin{frontmatter}

%

\title{Linear and Nonlinear Boundary Conditions: What's the difference? }

\author[sweden,southafrica]{Jan Nordstr\"{o}m}
\cortext[secondcorrespondingauthor]{Corresponding author}
\ead{jan.nordstrom@liu.se}
\address[sweden]{Department of Mathematics, Applied Mathematics, Link\"{o}ping University, SE-581 83 Link\"{o}ping, Sweden}
\address[southafrica]{Department of Mathematics and Applied Mathematics, University of Johannesburg, P.O. Box 524, Auckland Park 2006, Johannesburg, South Africa}

\begin{abstract}
In previous work, we derived new energy and entropy stable open boundary conditions and implementation procedures for linear and nonlinear initial boundary
value problems. These boundary procedures results in estimates bounded by external data only. 
Interestingly, these new boundary conditions generalize the well-known classical characteristic boundary conditions for linear problems to the nonlinear setting. We discuss the similarities and differences between these two boundary procedures and point out the advantages with the new procedures. In particular we show that the new boundary conditions  bound both linear and nonlinear initial boundary value problems and can be implemented both strongly and weakly.
\end{abstract}

\begin{keyword}
Initial boundary value problems \sep characteristic boundary conditions  \sep nonlinear boundary conditions \sep  open boundary conditions \sep  energy stability \sep  entropy stability \sep weak boundary conditions
\end{keyword}


\end{frontmatter}


\section{Introduction}
Energy and entropy stability for discretized initial boundary value problems (IBVPs) require boundary procedures at open boundaries for the continuous IBVPs that bounds the surface terms in external data. 
For linear hyperbolic IBVPs, a common boundary procedure is based on eigenvalue and eigenvector decomposition of the boundary term where one specifies the ingoing characteristic variables in terms  of the boundary data (and possibly outgoing ones) \cite{kreiss1970,Majda1975607,Higdon}. Characteristic based boundary conditions have also been used (although more rarely) in nonlinear \cite{THOMPSON19871,HEDSTROM1979222}  settings.
A new type of  characteristic like boundary procedures for open boundaries suitable for {\it both} linear and nonlinear  IBVPs were recently derived  \cite{nordstrom2022linear-nonlinear,NORDSTROM2024_BC,Nordström2025_open}. 
We discuss the relation between these new nonlinear characteristic  boundary conditions and the classical linear characteristic boundary conditions and point out the generality and advantages with the new procedures.

\section{Analysis of a scalar IBVP}\label{sec:illustration_theory}
To introduce the main idea in a simple setting, we consider the following scalar skew-symmetric IBVP
\begin{equation}\label{eq:nonlin_il}
u_t + (a u)_x+au_x=0,  \quad t \geq 0,  \quad  x  \geq 0, 
\end{equation}
with the initial condition $u(x,0)=f(x)$ and  the non-homogeneous linear or nonlinear boundary condition
\begin{equation}\label{eq:nonlin_BC_il}
b(u) -  g(t)=0,  \quad t \geq 0,  \quad  x=0.
\end{equation}
In (\ref{eq:nonlin_il}),  $a>0$ is a possibly solution dependent function. 
The initial data $f$ and the boundary data $g$
constitute input to the IBVP. 
We only consider the boundary $x=0$ and $f$ has compact support.
 \begin{remark}\label{explain_scal_energy_entropy} 
With a slight  abuse of notation we denote both energy and mathematical entropy as "energy".
 \end{remark}

By applying the energy method (multiply the equation with $u$ and integrate-by-parts) to (\ref{eq:nonlin_il}) we
obtain
\begin{equation}\label{eq:boundaryPart1_il}
\frac{1}{2} \frac{d}{dt}\|u\|^2 - (a u^2)_{x=0}=0,
\end{equation}
where $\|u\|^2= \int_0^{\infty} u^2 dx$. The skew-symmetry of (\ref{eq:nonlin_il}) leaves the energy rate to be determined by the boundary terms only. 
Note that specifying $u$ in (\ref{eq:boundaryPart1_il}) does not necessarily bound the energy, also the wave speed $a$ matter, i.e. 
 the complete boundary term must be considered when constructing the boundary operator $b(u)$. 
\subsection{Boundary conditions and implementation procedures}\label{BC_theory_il} 
We will consider three types of boundary conditions (\ref{eq:nonlin_BC_il}) for equation (\ref{eq:nonlin_il}). Firstly $b(u)=u$ used in standard advection problems, secondly the flux $b(u)=au$ typically used for conservation laws and finally the non-conventional $b(u)=\sqrt{a}u$ as discussed in \cite{nordstrom2022linear-nonlinear,NORDSTROM2024_BC}.  We start by imposing the boundary conditions strongly.
\subsubsection{Strong implementation}\label{BC_theory_il_strong} 
A strong implementation of the first condition $u-g=0$ gives the energy rate 
\begin{equation}\label{eq:scal1}
\frac{1}{2} \frac{d}{dt}\|u\|^2 =ag^2
\end{equation}
which is not bounded when $a=a(u,x,t)$. Secondly, the strongly imposed flux like condition $au-g$ yields 
\begin{equation}\label{eq:scal2}
\frac{1}{2} \frac{d}{dt}\|u\|^2 =ug
\end{equation}
which is bounded only for $g_2=0$. Finally the non-conventional choice $\sqrt{a}u-g=0$ yields the perfect result
\begin{equation}\label{eq:scal3}
\frac{1}{2} \frac{d}{dt}\|u\|^2 =g^2,
\end{equation}
and the possible growth due to the nonlinearity caused by $a=a(u,x,t)$ is now controlled in (\ref{eq:scal3}).
\begin{remark}\label{explain_scal} 
Note that the boundary condition $\sqrt{a}u-g=0$  leading up to the energy rate (\ref{eq:scal3}) is suitable for both linear and nonlinear settings. The scaling with $\sqrt{a}$ makes it possible to square the boundary term.
 \end{remark}
\subsubsection{Weak  implementation}\label{BC_theory_il_strong} 
Following \cite{nordstrom_roadmap}, we derive the conditions for energy boundedness in the continuous setting
such that we can straightforwardly mimic that numerically and construct energy stable so-called SBP-SAT schemes  \cite{svard2014review,fernandez2014review}. 
Equation (\ref{eq:nonlin_il}) augmented with a lifting operator for implementing the boundary condition (\ref{eq:nonlin_BC_il}) is 
\begin{equation}\label{eq:nonlin_lif_il}
u_t + (a u)_x+au_x + l(2\sigma(b(u)-g))=0,  \quad t \geq 0,  \quad  x  \geq 0,
\end{equation}
where $\sigma$ is a yet undetermined penalty parameter. The lifting operator $l$ for two smooth functions  $\phi, \psi$ satisfies $\int\limits \phi   l(\psi) dx = (\phi  \psi)_{x=0}$. The lifting operator enables development of the essential parts of the weak numerical boundary procedure using SAT terms already in the continuous setting \cite{Arnold20011749,nordstrom_roadmap}. 
By once more applying the energy method, now to (\ref{eq:nonlin_lif_il}),  we find that the energy rate becomes 
\begin{equation}\label{eq:boundaryPart1_il_pen}
\frac{1}{2} \frac{d}{dt}\|u\|^2 - a u^2+2\sigma u(b(u)-g)=0.
\end{equation}


Since the only boundary conditions that lead to an estimate for  strong imposition was the non-conventional boundary condition $\sqrt{a}u-s g=0$, we restrict the analysis to that case. With $\sigma = \sqrt{a}$ we find
\begin{equation}\label{eq:boundaryPart1_il_pen_nonlin}
\frac{1}{2} \frac{d}{dt}\|u\|^2 = a u^2-2 \sqrt{a}  u(\sqrt{a}u-g)=-(\sqrt{a} u)^2 + 2 (\sqrt{a} u) g=g^2 - (\sqrt{a}u-g)^2
\end{equation}
where we added and subtracted $g^2$. This gives a bound in both the linear and nonlinear case. As for the strong imposition, the possible growth due to the nonlinearity caused by $a=a(u,x,t)$ is now controlled.
\begin{remark}\label{explain_4} 
In the upcoming analysis for nonlinear systems of equations we have boundary terms of the form $U^T A(U) U$. 
Controlling the coefficient $a(u)$
corresponds to controlling 
the matrix $A(U)$.
\end{remark}

\section{Analysis of IBVPs for systems of equations}\label{sec:theory}
We recapitulate  some results from \cite{nordstrom2022linear-nonlinear,Nordstrom2022_Skew_Euler,NORDSTROM2024_BC,Nordström2024,Nordström2025_VOF,Nordström2025_open} to set the stage for the boundary condition analysis.  
 \subsection{The energy rate}\label{energy_rate} 
Consider the following skew-symmetric IBVP
\begin{equation}\label{eq:nonlin}
P U_t + (A_i U)_{x_i}+A_i^TU_{x_i}= 0,  \quad t \geq 0,  \quad  \vec x=(x_1,..,x_k) \in \Omega
\end{equation}
augmented with the initial condition $U(\vec x,0)=F(\vec x)$ in $\Omega$ and  the non-homogeneous boundary condition
\begin{equation}\label{eq:nonlin_BC}
B(U) - G(\vec x,t)=0,  \quad t \geq 0,  \quad  \vec x=(x_1,..,x_k) \in  \partial\Omega.
\end{equation}
In (\ref{eq:nonlin}), $U$ is a smooth  $n$ vector, the $n \times n $ matrices $A_i$ are smooth, Einsteins summation convention is used and $P$ is a symmetric positive definite (or semi-definite) time-independent matrix that defines an energy norm (or semi-norm) $\|U\|^2_P= \int_{\Omega} U^T P U d\Omega$. 
In (\ref{eq:nonlin_BC}), $B$ is the linear or nonlinear boundary operator and $G$ the boundary data. 
Note that (\ref{eq:nonlin}) and (\ref{eq:nonlin_BC}) encapsulates both linear 
and nonlinear  ($A_i=A_i(U)$) problems. 

The energy method  (multiply with $U^T$ and integrate over domain) applied to (\ref{eq:nonlin}) yields
\begin{equation}\label{eq:boundaryPart1_1}
\frac{1}{2} \frac{d}{dt}\|U\|^2_P + \oint\limits_{\partial\Omega}U^T  (n_i A_i)  U  ds= 0
\end{equation}
where $(n_1,..,n_k)^T$ is the outward pointing unit normal.  Note that the energy rate (\ref{eq:boundaryPart1_1}) was obtained without knowledge of details in the matrices $A_i$, only the skew-symmetry of (\ref{eq:nonlin}) was required  \cite{nordstrom2022linear-nonlinear,Nordstrom2022_Skew_Euler,NORDSTROM2024_BC,Nordström2024,Nordström2025_VOF,Nordström2025_open}.
\begin{remark}\label{explain_min_no} 
A minimal number of boundary conditions that leads to a bound in the linear case, also  leads to uniqueness 
\cite{kreiss1970,Strikwerda1977797,nordstrom2020}. 
For linear IBVPs,  the number of boundary conditions depends on external data, i.e.~the eigenvalues of $n_i A_i$. For nonlinear problems, the eigenvalues are solution dependent and it it is not completely known what that implies, see \cite{nordstrom2022linear} for examples. A minimal number of boundary conditions that lead to a bound is a necessary but maybe not a sufficient condition for uniqueness in the nonlinear case.
\end{remark}
 \subsection{Boundary conditions and implementation procedures}\label{BC_theory} 
We start by observing that the matrix in (\ref{eq:boundaryPart1_1}) is symmetric since $ U^T  (n_i A_i)   U = U^T ((n_i A_i)  +(n_i A_i )^T) U /2= U^T A U$. 
Next, the matrix $A$ is transformed to diagonal form as $ T^T A  T =  \Lambda =  \Lambda^+ +  \Lambda^-$
with new characteristic variables $W = T^{-1} U=T^T U$. The split of eigenvalues into positive and negative parts implies
\begin{equation}\label{1Dprimalstab_trans_final}
U^T   A  U   = W^T   \Lambda   W = W^T( \Lambda^+ +  \Lambda^-) W=W_+^T   \Lambda^+  W_+ + W_-^T   \Lambda^-  W_-=W_+^T   \Lambda^+ W_+ - W_-^T  |\Lambda^-| W_-.
\end{equation}
In (\ref{1Dprimalstab_trans_final}), the indicator matrices  $I^-, I^+$ where $I^-+I^+=I$ define $\Lambda^+ = I^+ \Lambda$ and  $\Lambda^-=  I^- \Lambda$ 
, $W^+=I^+ W$  and $W^- = I^- W$. The notation $A^+ = T \Lambda^+ T^T$,  $A^- = T \Lambda^- T^T$ and $|A^-|= T  |\Lambda^-| T^T$ leads to 
\begin{equation}\label{1Dprimalstab_trans_final_standard}
U^T   A  U    = U^T( A^+ +  A^-) U=U^T  A^+ U + U^T  A^- U=U^T  A^+ U - U^T  |A^-| U.
\end{equation}
In addition, we will also need the somewhat non-conventional but practical notation 
\begin{equation}\label{1Dprimalstab_trans_final_nonstandard}
A^+ =T \Lambda^+ T^T=T \sqrt{\Lambda^+}  T^T T \sqrt{\Lambda^+}  T^T =  \sqrt{A^+}  \sqrt{A^+}   \quad  \text{and similary} \quad |A^-|= \sqrt{ |A^-|}  \sqrt{ |A^-|}.
\end{equation}
\begin{remark}\label{eigenvalues} Stability problems are related to negative eigenvalues $\Lambda^-$. Possible zero eigenvalues are included in $\Lambda^+$. Hence, stabilizing boundary conditions must adjust terms related to $A^-$, $|A^-|$ and  $\sqrt{ |A^-|} $. 
\end{remark}
\begin{remark}\label{Sylvester}
If the diagonal entries $\Lambda$ are obtained by rotation where $T^T \neq T^{-1}$ \cite{nordstrom2022linear-nonlinear,Nordstrom2022_Skew_Euler,NORDSTROM2024_BC,Nordström2024,Nordström2025_VOF,Nordström2025_open}, Sylvester's Criterion \cite{horn2012}, shows that the number of diagonal entries with negative sign are equal to the number of negative eigenvalues if the matrix  $T$ is non-singular.  In the rotation case, $ \sqrt{A^+} =\sqrt{\Lambda^+} T^{-1}$ and $\sqrt{ |A^-|} =\sqrt{ |\Lambda^-|} T^{-1}$ leading to  $U^T   A  U =(\sqrt{A^+} U)^T(\sqrt{A^+}  U) - (\sqrt{ |A^-|} U)^T(\sqrt{ |A^-|} U)$ 
should be used \cite{Nordström2025_open}. 
\end{remark}

\subsubsection{Strong implementation}\label{BC_theory_il_strong_system} 
We will consider two classical characteristic boundary conditions for linear problems but now applied in nonlinear setting. They will be compared with the new non-conventional characteristic boundary condition.

Firstly we specify the ingoing characteristic variables $W^--G=0$ related to 
$\Lambda^-=- |\Lambda^-|$ and  obtain
\begin{equation}\label{1Dprimalstab_trans_final_extra_1}
U^T  A U = W^T   \Lambda W =  W_+^T   \Lambda^+ W_+  + G^T   \Lambda^-   G \geq  - G^T   |\Lambda^-|   G.
\end{equation}
The relation (\ref{1Dprimalstab_trans_final_extra_1}) gives an estimate in the linear, but not the nonlinear case (see (\ref{eq:scal1}) for the scalar estimate). 

Secondly, we 
specify the ingoing flux in (\ref{eq:nonlin_BC}) as $|A^-|U-G=0$ which gives
\begin{equation}\label{1Dprimalstab_trans_final_extra_2}
U^T   A U = U^T  A^+ U - U^T|A^-|U = U^T  A^+ U - U^T G \geq -U^T G. 
\end{equation}
As in the scalar case  (\ref{eq:scal2}), we only get an estimate for (\ref{1Dprimalstab_trans_final_extra_2}) with zero data $G=0$. 

Finally we investigate the new non-conventional choice $\sqrt{ |A^-|}  U-G=0$ corresponding to (\ref{eq:scal3}) and find
\begin{equation}\label{1Dprimalstab_trans_final_extra_3}
U^T   A  U  = U^T  A^+ U - U^T|A^-|U =  U^T  A^+ U - (\sqrt{ |A^-|}   U)^T(\sqrt{ |A^-|}   U) = U^T  A^+ U - G^T G \geq -G^TG.
\end{equation}
The estimate in (\ref {1Dprimalstab_trans_final_extra_3}) bounds and controls the potential growth. Similarly to the scalar case, the new characteristic boundary condition $\sqrt{ |A^-|}  U-G=0$ bounds both linear and nonlinear cases.

\subsubsection{Weak implementation}\label{BC_theory_il_weak_system} 
We introduce a lifting operator $L$  that enforces the boundary conditions in equation (\ref{eq:nonlin}) 
 as 
\begin{equation}\label{eq:nonlin_lif}
P U_t + (A_i U)_{x_i}+A^T_i U_{x_i}+L(2\Sigma(B(U)-G))=0.
\end{equation}
The lifting operator exists only at the surface of the domain and for two smooth vector functions  $\phi, \psi$ it satisfies $\int\limits \phi^T   L(\psi) d \Omega = \oint\limits \phi^T  \psi ds$. As stated above, it enables development  of the numerical boundary procedure in the continuous setting \cite{Arnold20011749,nordstrom_roadmap}. Next, we prove the following Proposition.
\begin{proposition}\label{lemma:GenBC_sys}
Consider equation (\ref{eq:nonlin_lif}) with the general boundary operator $B(U)=C(U)U$ and penalty matrix $ \Sigma(U)$. The energy rate  is bounded by external boundary data, i.e. the boundary terms become
\begin{equation}\label{eq:BT}
U^T A^+U+ (\sqrt{ |A^-|}U-G)^T(\sqrt{ |A^-|}U-G)-G^TG  \geq - G^TG
\end{equation}
 if and only if $C=\sqrt{ |A^-|}$ and $\Sigma=\sqrt{ |A^-|}$.
\end{proposition}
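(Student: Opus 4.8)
The plan is to run the energy method on the lifted system and then read off $C$ and $\Sigma$ by matching the resulting boundary integrand against (\ref{eq:BT}). Multiplying (\ref{eq:nonlin_lif}) by $U^T$ and integrating over $\Omega$, the skew-symmetric volume terms collapse to the surface term of (\ref{eq:boundaryPart1_1}), while the defining property of the lifting operator $L$ converts the penalty into the surface integral $\oint 2U^T\Sigma(B(U)-G)\,ds$. With $B(U)=C(U)U$ and the symmetrization $U^T(n_iA_i)U=U^TAU$, this gives $\tfrac{1}{2}\tfrac{d}{dt}\|U\|_P^2=-\oint_{\partial\Omega}\mathcal{I}\,ds$ with boundary integrand $\mathcal{I}=U^TAU+2U^T\Sigma(CU-G)$. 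The whole statement then reduces to an algebraic identity for $\mathcal{I}$, to be checked pointwise on $\partial\Omega$.

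For sufficiency ($\Leftarrow$) I would substitute $C=\Sigma=\sqrt{|A^-|}$ and complete the square exactly as in the scalar computation leading to (\ref{eq:scal3}). Using $U^TAU=U^TA^+U-U^T|A^-|U$ from (\ref{1Dprimalstab_trans_final_standard}) and $\sqrt{|A^-|}\sqrt{|A^-|}=|A^-|$ from (\ref{1Dprimalstab_trans_final_nonstandard}), the integrand becomes $U^TA^+U-U^T|A^-|U+2U^T|A^-|U-2U^T\sqrt{|A^-|}G$; adding and subtracting $G^TG$ then collects the three $|A^-|$-terms into the perfect square $(\sqrt{|A^-|}U-G)^T(\sqrt{|A^-|}U-G)$, producing exactly (\ref{eq:BT}). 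The bound $\ge -G^TG$ is immediate, since $U^TA^+U\ge0$ (zero eigenvalues are kept in $\Lambda^+$, Remark \ref{eigenvalues}) and the square is non-negative.

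For necessity ($\Rightarrow$) I would treat $\mathcal{I}$ and (\ref{eq:BT}) as a polynomial identity in the independent data $U$ and $G$ and match. The part linear in $G$ is $-2U^T\Sigma G$ on the left and, after expanding the square and cancelling $G^TG$, $-2U^T\sqrt{|A^-|}G$ on the right; since this must hold for all $U,G$ it forces $\Sigma=\sqrt{|A^-|}$. Feeding this back, the $G$-independent part requires $U^TAU+2U^T\sqrt{|A^-|}CU=U^TA^+U+U^T|A^-|U$, i.e.\ $U^T\sqrt{|A^-|}CU=U^T|A^-|U=U^T\sqrt{|A^-|}\sqrt{|A^-|}U$ for all $U$, which yields $C=\sqrt{|A^-|}$.

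The main obstacle is precisely this last identification. A quadratic form only detects the symmetric part of its matrix, so $U^T\sqrt{|A^-|}CU=U^T\sqrt{|A^-|}\sqrt{|A^-|}U$ pins down only $\sqrt{|A^-|}C+C^T\sqrt{|A^-|}=2|A^-|$, and on the kernel of $|A^-|$ (the outgoing characteristics, where the boundary condition constrains nothing) $C$ is genuinely free. I would therefore state explicitly that $C=\sqrt{|A^-|}$ is the natural symmetric solution, unique once one restricts to the range of $\sqrt{|A^-|}$ where the residual $B(U)-G=\sqrt{|A^-|}U-G$ of (\ref{eq:BT}) actually lives; equivalently, requiring the squared factor in (\ref{eq:BT}) to be the boundary-condition residual $CU-G$ itself fixes $C=\sqrt{|A^-|}$. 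In the rotated case $T^T\neq T^{-1}$ of Remark \ref{Sylvester} I would carry the factorizations $\sqrt{A^+}=\sqrt{\Lambda^+}T^{-1}$ and $\sqrt{|A^-|}=\sqrt{|\Lambda^-|}T^{-1}$ throughout, which alters only the symmetrization bookkeeping and leaves the matching argument intact.
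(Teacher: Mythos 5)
Your proposal is correct and follows essentially the same route as the paper: apply the energy method to the lifted equation, convert the penalty to a surface term via the lifting identity, add and subtract $G^TG$, complete the square, and force the residual indefinite quadratic form $U^T(-\sqrt{|A^-|}\sqrt{|A^-|}+\Sigma C+(\Sigma C)^T-\Sigma\Sigma^T)U$ to vanish, which the paper does by exhibiting $\Sigma^T=C=\sqrt{|A^-|}$. Your explicit remark that the quadratic-form identity only determines the symmetric part of $\sqrt{|A^-|}\,C$ (so that the ``only if'' needs the extra requirement that the squared factor coincide with the boundary-condition residual $CU-G$) is a point the paper's one-line necessity argument passes over, and your proposed resolution is a reasonable way to close it.
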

\begin{proof} 
The energy method (multiply with $U^T$ and integrate over domain) applied to (\ref{eq:nonlin_lif}) yields 
\begin{equation}\label{eq:boundaryPart1_1_proof}
\frac{1}{2} \frac{d}{dt}\|U\|^2_P + \oint\limits_{\partial\Omega} U^T   \tilde A U+ 2U^T  \Sigma(B(U)-G)  ds= 0.
\end{equation}
By adding and subtracting $G^TG$  and assuming the general form $B(U)=C(U)U$ of the boundary operator, the argument in the boundary term on the lefthand side  in  (\ref{eq:boundaryPart1_1_proof}) becomes 
$U^T A^+ U-G^TG + \delta BT$, where 
\begin{equation}\label{eq:deltaBT_proof}
\delta BT=(\Sigma^T U-G)^T(\Sigma^T U-G) +U^T(-\sqrt{ |A^-|}\sqrt{ |A^-|}+\Sigma C +(\Sigma C)^T-\Sigma \Sigma^T)U.
\end{equation}
The second indefinite term vanish for $\Sigma^T=C=\sqrt{ |A^-|}$. Negative signs in $\Sigma,C$ can be absorbed in $G$.
\end{proof}
\begin{remark}\label{main_point} 
Proposition \ref{lemma:GenBC_sys} holds in both linear and nonlinear cases.
\end{remark}

\subsection{The general form of the new nonlinear charcteristic boundary conditions}\label{nonlinear_BC_char}
As often done for linear IBVPs, we generalize the new characteristic boundary condition $\sqrt{ |A^-|}  U-G=0$ and allow for contributions from the outgoing variables. The generalized boundary condition is 
\begin{equation}\label{Gen_BC_form_simp}
 B(U)-G=S^{-1}(\sqrt{ |A^-|}  - R \sqrt{ A^+} )U - G =0 \,\ \text{or equivalently} \,\  (\sqrt{ |A^-|}  - R \sqrt{ A^+} )U - SG =0,
\end{equation}
where $R$ and $S$ are appropriate matrices to be determined. 
The lifting operator in  (\ref{eq:nonlin_lif}) becomes
\begin{equation}\label{Pen_term_Gen_BC_form_simp}
L(2\Sigma(BU-G))= L(2 \sqrt{ |A^-|}   ( (\sqrt{ |A^-|} -R \sqrt{ A^+} ) U - SG )).
\end{equation}
The following Proposition with slightly modified notations is similar to Lemma 3.1 in \cite{NORDSTROM2024_BC}.
\begin{proposition}\label{lemma:GenBC}
Consider the governing equation (\ref{eq:nonlin_lif}), the boundary term $ 
U^T  A  U $, the boundary condition (\ref{Gen_BC_form_simp}) and the lifting operator $L$ in (\ref{Pen_term_Gen_BC_form_simp}).

The boundary term augmented with 1)  strong ($L=0$) nonlinear homogeneous boundary conditions is positive semi-definite i.e. 
\begin{equation}\label{BT1}
U^T  A  U   =  (\sqrt{ A^+} U)^T(I-R^T R)(\sqrt{ A^+}  U)  \geq 0 \nonumber
\end{equation}
if the matrix $R$ satisfies
\begin{equation}\label{R_condition}
I- R^T  R  \geq 0.
\end{equation}

The boundary term augmented with 2) strong ($L=0$) nonlinear inhomogeneous boundary conditions is bounded by the data $G$,  i.e. 
\begin{equation}
\label{BT2}
 U^T  A  U  =
\begin{bmatrix}
\sqrt{ A^+}  U \\
G
\end{bmatrix}^T
\begin{bmatrix}
I- R^T R & - R^T S \\
-S^T R  &  I-S^T S
\end{bmatrix}
\begin{bmatrix}
\sqrt{ A^+}  U \\
G
\end{bmatrix} - G^T G  \geq  -G^TG. 
\end{equation}
if the matrix  $R$ satisfies (\ref{R_condition}) with strict inequality and the matrix $S$ is such that
\begin{equation}\label{S_condition}
I- S^T  S - (R^T S)^T (I- R^T  R)^{-1} (R^T S)  \geq 0 \quad  \text{where}  \quad (I- R^T  R)^{-1}=\sum^{\infty}_{k=0}(R^T R)^k.
\end{equation}

The boundary term augmented with 3) weak ($L= L(2 \sqrt{ |A^-|}   ( (\sqrt{ |A^-|} -R \sqrt{ A^+} ) U ))$) nonlinear homogeneous boundary conditions is positive semi-definite i.e. the complete boundary term becomes
\begin{equation}\label{BT3}
 (\sqrt{ A^+}  U)^T(I-R^T R)(\sqrt{ A^+}  U) +  (\sqrt{ |A^-|}  U - R \sqrt{ A^+}  U)^T  (\sqrt{ |A^-|}  U - R \sqrt{ A^+}  U) \geq 0 \nonumber
\end{equation}
if the matrix $R$
satisfies (\ref{R_condition}).

The boundary term augmented with 4) weak ($L = L(2 \sqrt{ |A^-|}   ( (\sqrt{ |A^-|} -R \sqrt{ A^+} ) U - SG ))$) nonlinear inhomogeneous boundary conditions is bounded by the data $G$,  i.e. the complete boundary term becomes
\begin{align}
&  (\sqrt{ |A^-|} U -(R \sqrt{ A^+} U +SG))^T  (\sqrt{ |A^-|} U -(R \sqrt{ A^+} U +SG)) +  \nonumber \\ 
&  \begin{bmatrix}
\sqrt{ A^+} U \\
G
\end{bmatrix}^T
\begin{bmatrix}
I- R^T R & - R^T S \\
-S^T R  &  I-S^T S
\end{bmatrix}
\begin{bmatrix}
\sqrt{ A^+} U \\
G
\end{bmatrix} - G^TG \geq -G^TG \nonumber
\end{align}
if the matrix $R$ satisfies (\ref{R_condition}) with strict inequality and the matrix $S$ satisfies  (\ref{S_condition}).
\end{proposition}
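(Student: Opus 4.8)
The plan is to treat all four parts on a common footing, because each reduces to inserting the relevant boundary data and penalty into the scalar quantity $U^T A U$ and completing the square. The common starting point is the factorization
\[
U^T A U = (\sqrt{ A^+} U)^T(\sqrt{ A^+} U) - (\sqrt{ |A^-|} U)^T(\sqrt{ |A^-|} U),
\]
which is (\ref{1Dprimalstab_trans_final_standard}) (and Remark \ref{Sylvester} in the rotation case). Throughout I abbreviate $p=\sqrt{ A^+} U$ and $q=\sqrt{ |A^-|} U$, so that $U^T A U = p^T p - q^T q$.

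For the two strong cases ($L=0$) I would substitute the boundary condition (\ref{Gen_BC_form_simp}) written as $(\sqrt{ |A^-|}-R\sqrt{ A^+})U=SG$, i.e. $q=Rp+SG$ (with $G=0$ in part 1), directly into $p^T p - q^T q$. The homogeneous substitution $q=Rp$ gives $p^T p-(Rp)^T(Rp)=p^T(I-R^T R)p$, which is (\ref{BT1}). For the inhomogeneous case I would expand $p^T p-(Rp+SG)^T(Rp+SG)$, add and subtract $G^T G$, and collect the four cross terms; a direct check then identifies the result with the block quadratic form in (\ref{BT2}) minus $G^T G$.

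For the two weak cases I would first apply the energy method to the lifted equation (\ref{eq:nonlin_lif}) exactly as in the proof of Proposition \ref{lemma:GenBC_sys}, using the lifting identity $\int \phi^T L(\psi)\, d\Omega = \oint \phi^T \psi\, ds$ with penalty $\Sigma=\sqrt{ |A^-|}$. This produces the boundary integrand $U^T A U + 2U^T\sqrt{ |A^-|}\big((\sqrt{ |A^-|}-R\sqrt{ A^+})U-SG\big)$, which equals $p^T p+q^T q-2q^T(Rp+SG)$ after using $U^T|A^-|U=q^T q$. The key algebraic observation is that $q^T q-2q^T(Rp+SG)=(q-(Rp+SG))^T(q-(Rp+SG))-(Rp+SG)^T(Rp+SG)$, so the integrand regroups as the perfect square $(q-(Rp+SG))^T(q-(Rp+SG))$ plus the leftover $p^T p-(Rp+SG)^T(Rp+SG)$. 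The leftover is exactly the strong expression already handled, giving (\ref{BT3}) when $G=0$ and the displayed inhomogeneous form otherwise.

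It then remains to sign these expressions. The perfect-square contributions in the weak cases are nonnegative by construction, so in every case positivity reduces either to $p^T(I-R^T R)p\ge 0$ or to positive semi-definiteness of the block matrix in (\ref{BT2}). The former is precisely (\ref{R_condition}). For the block matrix I would invoke the Schur complement criterion: assuming (\ref{R_condition}) holds strictly, $I-R^T R$ is invertible with inverse equal to the Neumann series $\sum_{k=0}^{\infty}(R^T R)^k$ (convergent exactly because the spectral radius of $R^T R$ is below one), and the block matrix is positive semi-definite if and only if its Schur complement $I-S^T S-(R^T S)^T(I-R^T R)^{-1}(R^T S)$ is positive semi-definite, which is (\ref{S_condition}). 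I expect the main obstacle to be the bookkeeping that matches the Schur complement with (\ref{S_condition}) and verifies that the block-matrix cross terms reproduce the cross terms computed above; once the algebra is aligned, the positivity statements follow immediately.
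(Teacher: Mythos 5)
Your proposal is correct and follows essentially the same route as the paper: substitute the boundary condition (or add the penalty term via the lifting identity), complete positive squares by adding and subtracting $G^TG$, and reduce positivity of the resulting block quadratic form to the Schur-complement condition (\ref{S_condition}) under the strict version of (\ref{R_condition}). The paper merely sketches this and defers the algebra to Lemma 3.1 of the cited reference, so your write-up is a faithful, fully worked-out version of the intended argument.
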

\begin{remark}\label{explain_notation_nonstadard} 
The conditions  1-4  above are sufficient but not necessary conditions for limiting the boundary terms. We have used the notation $A \geq 0$ to indicate that the matrix $A$ is positive semi-definite above.
\end{remark}
\begin{proof} 
 The idea of the proof of Lemma \ref{lemma:GenBC} is to complete positive squares by adding and subtracting $G^TG$ (as was done in Proposition \ref{lemma:GenBC_sys}) and using specific choices of the matrix parameters $R$ and $S$. The proof is identical to the proof of Lemma 3.1 in \cite{NORDSTROM2024_BC} by replacing $\sqrt{|\Lambda^-|}W^-,\sqrt{\Lambda^+}W^+$,$\Sigma = 2( I^-T^{-1})^T \Sigma S$ with  $\sqrt{ |A^-|} U, \sqrt{ A^+} U$,$ \Sigma=2 \sqrt{ |A^-|} S$ respectively and $\tilde \Sigma = 2( I^-T^{-1})^T \sqrt{ |\Lambda^-|} S$ with $\Sigma=2 \sqrt{ |A^-|} S$.  
\end{proof}

\section{Summary and conclusions}\label{sec:conclusion}
We discussed similarities and differences between linear and nonlinear open boundary conditions. In particular, we compared the classical linear characteristic boundary conditions  with new nonlinear ones. It was shown that the classical characteristic boundary conditions do not lead to energy or entropy estimates while the new ones do. The main difference from the old linear characteristic conditions is that the new characteristic variables are scaled with the square root of the eigenvalues . The new nonlinear characteristic boundary conditions are stable for both for linear and nonlinear problems.
\section*{Acknowledgment}
Jan Nordstr\"{o}m was supported by Vetenskapsr{\aa}det, Sweden [award no.~2021-05484 VR] and University of Johannesburg Global
Excellence and Stature Initiative Funding.


\bibliographystyle{elsarticle-num}
\bibliography{References_Jan}

\end{document}